\newtheorem{theorem}{Theorem}
\newtheorem{proposition}[theorem]{Proposition}
\newcommand{\C}{\mathbb C}
\newcommand{\D}{\mathbb D}
\def\Re{\operatorname{Re}}
\title{Convex domains with locally Levi-flat boundaries}
\author{Nikolai Nikolov, Pascal J. Thomas}
\address{Institute of Mathematics and Informatics\\ Bulgarian Academy
of Sciences\\ Acad. G. Bonchev 8, 1113 Sofia,
Bulgaria}\email{nik@math.bas.bg}
\address{Universit\'e de Toulouse\\ UPS, INSA, UT1, UTM \\
Institut de Math\'e\-matiques de Toulouse\\
F-31062 Toulouse, France} \email{pthomas@math.univ-toulouse.fr}
\subjclass[2010]{32T27}
\keywords{Levi-flat hypersurface}
\begin{document}

\begin{thanks}{This note was written during the stay of the
first-named author as a guest professor at the Paul Sabatier University, Toulouse in
November--December 2011.}
\end{thanks}

\begin{abstract} It is shown that a domain in $\C^n$ which is locally convex and has $\mathcal C^1$-smooth
Levi-flat boundary is locally linearly equivalent to a Cartesian product of a planar domain and $\C^{n-1}.$
\end{abstract}

\maketitle

A hypersurface $M\subset\C^n$ is called Levi-flat near a $\mathcal C^1$-smooth point $p\in M$ if $M$ divides
$\C^n$ near $p$ into two pseudoconvex domains (say $M^+$ and $M^-$). By \cite{A}, $M$ is Levi-flat near $p$
if and only $M$ admits a (unique) $\mathcal C^1$-smooth foliation near $p$ by one-codimensional complex manifolds.
It is well-know that if $M$ is real-analytic, then it is locally biholomorphic to a real hyperplane.

We have more in the $\mathcal C^1$-smooth ``convex" case.

\begin{proposition}\label{1} Let $M\subset\C^n$ be a Levi-flat hypersurface near a
$\mathcal C^1$-smooth point $p\in M$ such that $M^+$ is convex. Then $M$ near $p$ is linearly equivalent to
the Cartesian product of a planar curve and $\C^{n-1}.$
\end{proposition}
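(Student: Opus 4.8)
The plan is to work in three stages: show that the leaves of the Levi foliation are open pieces of complex affine hyperplanes, normalize coordinates so that $M$ is a graph over its tangent hyperplane at $p$, and then deduce from convexity that all these affine hyperplanes are parallel.

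First I would prove that every leaf $L$ of the Levi foliation lies in a complex affine hyperplane. Fix $q\in L$. Since $\overline{M^+}$ is (locally) convex and $L\subset M=\partial M^+$, there is a $\C$-linear functional $\ell$ with $\Re\ell\le\Re\ell(q)$ on $\overline{M^+}$, equality holding at $q$ (a supporting real hyperplane at $q$, written through the Hermitian inner product). The restriction $\Re\ell|_L$ is harmonic and attains its maximum over $L$ at the point $q$, hence is constant by the maximum principle; being the real part of the holomorphic function $\ell|_L$ on the connected complex manifold $L$, $\ell$ itself is constant there. Thus $L\subset\{\ell=\ell(q)\}$, and for dimension reasons $L$ is open in this hyperplane.

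Next I would normalize. Put $p=0$ and choose linear coordinates so that the leaf $L_0$ through $p$ is a neighbourhood of $0$ in $\{z_n=0\}$ and $T_0M=\{\Im z_n=0\}$; after replacing $z_n$ by $-z_n$ if necessary, assume $\overline{M^+}\subset\{\Im z_n\le0\}$ near $0$. Then near $0$, $M$ is the graph $\{\Im z_n=h(z',\Re z_n)\}$ with $h$ of class $\mathcal C^1$, $h(0)=0$, $dh(0)=0$, $M^+=\{\Im z_n<h\}$, and convexity of $M^+$ is precisely concavity of $h$; in particular $h\le0$. Since $L_0\subset M$ forces $h(z',0)\equiv0$ and $h\le0$, the gradient of $h$ vanishes identically for $\Re z_n=0$. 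At this point the proposition is equivalent to the assertion that $h$ does not depend on $z'$: then $M=\{z_n\in\gamma\}$ with $\gamma=\{t+ih(0,t)\}$ a planar $\mathcal C^1$-curve, which is the asserted linear product.

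The main point is the remaining step, that the affine hyperplanes are parallel. Parametrize the leaves by a real parameter $t$, with $L_0$ at $t=0$; each $L_t$ is open in an affine hyperplane $A_t$ lying in the tangent (hence supporting) hyperplane $H_t$ of $\overline{M^+}$ along $L_t$, so $\overline{M^+}\subset\bigcap_tH_t^-$. I would combine two observations. (i) A complex affine hyperplane contained in a real hyperplane $H$ must be parallel to the maximal complex subspace of $H$: restrict the $\C$-linear functional defining $H$ to it; a holomorphic function with constant real part is constant. In particular, any two leaves contained in a common supporting hyperplane are parallel. (ii) For any $s,t$ the affine signed distance to $H_t$ is pluriharmonic and nonnegative on $\overline{M^+}$, so its restriction to $L_s$ is a nonnegative harmonic function; by the minimum principle this restriction is either identically $0$ — whence $L_s\subset H_t$ and, by (i), $A_s\parallel A_t$ — or strictly positive. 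It then suffices to show that the set of $t$ with $A_t\parallel A_0$, which is nonempty and easily seen to be closed, is also open, hence all of the interval. This is where I expect the real difficulty to be: one must rule out that leaves near $L_0$ tilt away from $A_0$, and the obstruction is that, in the complex directions, the convex wedge $\bigcap_sH_s^-$ cannot be pinched — this is exactly the configuration realized by the non-convex model $M=\{\Im(z_2\bar z_1)=0\}$, whose leaves are the mutually non-parallel complex lines through $0$. Granting parallelism, $\Im z_n$ is constant on each leaf, and since $M$ is the union of its leaves with $\Im z_n=h(z',\Re z_n)$ on $M$, the function $h$ depends only on $\Re z_n$, which by the previous step finishes the proof.
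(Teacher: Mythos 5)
Your first stage (each leaf is an open piece of a complex affine hyperplane, via a supporting functional and the maximum principle applied to $\Re\ell$ on the leaf) is correct, and is in fact a clean alternative to the paper's route, which instead shows directly that the slice $\{z_1=0\}\cap\D^n(0,\delta)$ lies in $M$ by combining convexity with tautness of $M^-$. The normalization as a concave graph is also fine. But the proof has a genuine gap exactly where you flag it: you never establish that the affine hyperplanes carrying the leaves are parallel, and the open--closed scheme you sketch cannot close it as stated. Your observation (ii) only yields the dichotomy ``$L_s\subset H_t$ or the signed distance is strictly positive on $L_s$''; it provides no mechanism forcing the second alternative to fail for $s$ near $t$, so openness of $\{s:A_s\parallel A_t\}$ --- which is essentially the entire content of the proposition --- is left unproved. ``Granting parallelism'' is granting the theorem.

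The missing idea is quantitative rather than topological. Write the leaf with parameter $t$ as $z_1=-g(t)+it+\alpha(t)\cdot z'$, where $g$ is convex, $\mathcal C^1$-smooth, $g(0)=g'(0)=0$. The whole leaf over a polydisc of \emph{fixed} radius $\delta$ in $z'$ must lie in the supporting half-space $\{\Re z_1\le 0\}$ at the origin; maximizing $\Re(\alpha(t)\cdot z')$ over $z'\in\D^{n-1}(0,\delta)$ gives $\|\alpha(t)\|\le g(t)/\delta$, i.e.\ the tilt of a leaf relative to $L_0$ is controlled by its gap $g(t)$ from the tangent hyperplane. Repeating this with the supporting half-space at the base point $\left(-g(t_0)+it_0,0\right)$, in linear coordinates adapted to that tangent hyperplane, yields $\|\alpha(t)-\alpha(t_0)\|\le\left[g(t)-g(t_0)-(t-t_0)g'(t_0)\right]/\delta$, and by the Mean Value Theorem and uniform continuity of $g'$ the right-hand side is $o(|t-t_0|)$ uniformly in $t_0$. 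A function whose difference quotients tend to $0$ uniformly is constant (chain the estimate over a fine partition), so $\alpha$ is constant and the leaves are parallel. This uniform, second-order use of convexity at \emph{every} base point is the step your proposal lacks; the rest of your outline is sound.
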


\begin{proof} After an affine change of the variables, we may assume that $p=0$
and the (real) tangent hyperplane to $M$ at $0$ is $\{\Re z_1=0\}.$ In what follows, $\delta$ will
stand for a positive constant which will be shrunk as needed in the course of the proof.
By convexity $T_\delta=\{z_1=0\}\cap\D^n(0,\delta)\subset\overline{M^-}$ ($\D^n$ denotes a polydisc),
and by $\mathcal C^1$-smoothness we may assume that $T_\delta + (\varepsilon,0,\dots,0)\subset M^-$ for any small
$\varepsilon >0.$ Since $M^-$ is taut near $0$ (because of $\mathcal C^1$-smoothness and pseudoconvexity), then
$T_\delta\subset M.$

Hence $M_\delta=M\cap\D^n(0,\delta)$ is foliated by affine complex hyperplanes.
The set $M_\delta\cap(\C \times \{(0,\dots,0)\})$ is parametrized
by $-g(t)+it$, where $g$ is a convex, nonnegative $\mathcal C^1$-smooth function
with $g(0)=g'(0)=0$.
Then there exists a continuous map $\alpha:(-\delta,\delta)\mapsto\C^{n-1}$
so that $M_\delta$ is parametrized by
\begin{equation}
\label{param}
\left( -g(t)+it+ \alpha(t) \cdot z' , z' \right),\quad z'=(z_2,\dots,z_n).
\end{equation}
Since $M_\delta \subset \{\Re z_1\le 0\}$, we have (shrinking $\delta$ again)
$$
-g(t) + \Re ( \alpha(t) \cdot z' ) \le 0\mbox{ for any }z'\in\D^{n-1}(0,\delta).
$$
This implies that $\| \alpha (t) \| \le g(t)/\delta$.

Now consider more generally a point $p_0:=\left( -g(t_0)+it_0 , 0 \right)\in M$.
In the new coordinates
$$
\tilde z_1 = (1-ig'(t_0))\left[ (z_1 + g(t_0) - i t_0)- \alpha(t_0) \cdot z' \right],\quad
\tilde  z' =  z',
$$
one can check that the real tangent hyperplane to $M$ at $p_0$ is $\{\Re \tilde z_1=0\}$.
The equation of the hyperplane given by \eqref{param} becomes
$$
\tilde z_1 = (1-ig'(t_0)) \left[ (-g(t) + g(t_0) + i (t-t_0)+ (\alpha(t)-\alpha(t_0)) \cdot
\tilde z' \right],
$$
and using the fact that $M \subset \{\Re \tilde z_1 \le 0\}$, for $\|\tilde z'\|\le \delta_1$,
\begin{multline*}
\sqrt{1+g'(t_0)^2} \| \alpha(t)-\alpha(t_0) \| \le
\left[ g(t) - g(t_0) - (t-t_0)g'(t_0)\right]/\delta
\\
= (t-t_0) (g'(t_1)-g'(t_0))/\delta
\end{multline*}
for some value $t_1$ between $t_0$ and $t$, by the Mean Value Theorem.  Using the
uniform continuity of $g'$, we find some interval around $0$, where for any $\varepsilon >0$,
there exists $\eta >0$ such that $|t-t_0|\le \eta$ implies
$\| \alpha(t)-\alpha(t_0) \|/|t-t_0| \le \varepsilon$.  We deduce that $\alpha$
must be constant over that interval which yields the product structure that was claimed.
\end{proof}

\noindent{\bf Remarks.} 1. The argument at the beginning of the proof shows that
if the boundary of a linearly convex domain is locally Levi-flat,
then it too is foliated by complex hyperplanes.
On the other hand, the conclusion of this proposition does not hold in the
linearly convex case, even up to transformations preserving affine complex
hyperplanes: the boundary of symmetrized bidisc
$\Bbb G_2=\{z\in\C^2:|z_1-{\overline z_1}z_2|+|z_2|^2<1\}$ locally contains the tangent line
at any smooth point but $\Bbb G_2$ is not locally fractional linearly equivalent to a Cartesian product.

2. The result does not extend to the case where the Levi form has positive constant rank (strictly less than the maximal rank $n-1$).  
For example, consider the tube hypersurface over the cone $x_1x_3=x_2^2$ ($x_1>0$), i.e.
the hypersurface in $\C^3$ given by
$$
M:=\left\{ z \in \C^3 : \rho(z):= (\Re z_2)^2-(\Re z_1)(\Re z_3)=0, \Re z_1 >0 \right\}.
$$
Then $\{\rho <0<\Re z_1\}$ is convex, the Levi form of $\rho$ is semi definite positive and
of constant rank $1$ on $M$, and $M$ is foliated by portions of the affine complex lines
$$
\{ (a^2 \zeta + i b_1, a \zeta + i b_2, \zeta), \zeta \in \C\},\quad a, b_1, b_2 \in \mathbb R,
$$
which of course are not all parallel to each other.

\end{document}